\documentclass[12 pt]{article}
\usepackage{amssymb,amsfonts,amsmath,latexsym,verbatim,amscd,amsthm,graphicx,epsfig}
\title{Multiple-layer solutions to the Allen-Cahn equation \\ on hyperbolic space}
\author{Rafe Mazzeo \thanks{Department of Mathematics, Stanford University, 
Stanford, CA 94305; Email: mazzeo@math.stanford.edu; Supported by the NSF Grant DMS-1105050}
\\ Stanford University \and Mariel Saez \thanks{ Departamento de Matem\'aticas, Avda. Vicu\~na Mackenna 4860. Macul,
 Santiago. Chile; Email: mariel@mat.puc.cl; Supported by Conycit under
 grants Fondecyt de Iniciaci\'on 11070025,  Fondecyt regular 1110048  and proyecto Anillo ACT-125, CAPDE.} \\ P. Universidad
 Cat\'olica de Chile.} 

\setlength{\oddsidemargin}{.25in}
\setlength{\evensidemargin}{.25in} 
\setlength{\textwidth}{6in}
\setlength{\topmargin}{0in} 
\setlength{\textheight}{8.5in}

\newcommand{\be}{\begin{equation}} 
\newcommand{\ee}{\end{equation}}
 
\newcommand{\RR}{{\mathbb R}} 
\newcommand{\HH}{{\mathbb H}}
\newcommand{\rr}{{\mathbb R}} 
\newcommand{\hh}{{\mathbb H}}

\newtheorem{prop}{Proposition}[section]

  \newcommand{\calC}{{\mathcal C}}
  
\newcommand{\calE}{{\mathcal E}}

 \newcommand{\calH}{{\mathcal H}}

 \newcommand{\calO}{{\mathcal O}}

 \newcommand{\calU}{{\mathcal U}}
  \newcommand{\calV}{{\mathcal V}}
\newcommand{\calW}{{\mathcal W}}

\newcommand{\del}{\partial}
\newcommand{\sech}{\operatorname{sech}}
\newcommand{\e}{\epsilon}

\begin{document} 
\bibliographystyle{plain} 

\maketitle
\begin{abstract}
In this paper we study the existence of multiple-layer solutions to the elliptic Allen-Cahn equation in hyperbolic space:
\[
-\Delta_{\hh^n } u+F'(u)=0;
\]
here $F$ is a nonnegative double-well potential with nondegenerate minima. We prove that for any collection of 
widely separated, non-intersecting hyperplanes in $\hh^n$, there is a solution to this equation which has
nodal set very close to this collection of hyperplanes. Unlike the corresponding problem in $\RR^n$,
there are no constraints beyond the separation parameter.
\end{abstract}
\section{Introduction}
Let $f$ be a scalar function on $\RR$ which is the derivative of a nondegenerate double-well potential $F$, i.e.\ 
$F(s) \geq 0$ for all $s \in \RR$; $F^{-1}(0) = \{\pm 1\}$ and $F''(\pm 1) > 0$. 
Associated to this function is the Allen-Cahn equation
\begin{equation}
-\Delta u + f(u) = 0,
\label{ACE}
\end{equation}
which models phase transitions, grain boundaries and other physical and geometric phenomena. As is common
in this subject, we assume that $F$ is monotone decreasing on $(-\infty,-1)$ and monotone increasing on $(1,\infty)$, 
and since its regularity is not particularly germane to our work, we also assume that $F\in \calC^\infty(\RR)$.

This equation has most frequently been studied on domains $\Omega \subset \RR^n$ or else on all of $\RR^n$. 
In these investigations it is standard to consider the family of scaled equations
\begin{equation}
-\Delta u  + \frac{1}{\e^2} f(u) = 0,
\label{ACES}
\end{equation}
especially in the limit as $\e \searrow 0$. Notice that \eqref{ACES} is equivalent to \eqref{ACE} for the equation 
on all of $\RR^n$ by dilation, but these problems are inequivalent on a fixed bounded domain. If $u_\e$ is a solution 
which is energy-minimizing in an appropriate sense, then one is interested in the location of the nodal set $\{u_\e = 0\}$.
Under rather general hypotheses, the limit of this nodal set as $\e \searrow 0$ is a minimal or constant mean curvature
hypersurface, at least in a weak sense.  Another closely related and important aspect of this problem is its relationship
to the De Giorgi conjecture, which asks whether an entire solution to \eqref{ACE} on $\RR^n$ which is monotone in one 
direction necessarily depends on only one variable. The validity of this in low dimensions and its failure in
high dimensions is the analogue of the Bernstein theorem for minimal graphs in $\RR^n$, $n \leq 8$. 
The literature on these problems in Euclidean space is immense, and we cite only \cite{DKW} and references therein.

It is also of interest to study \eqref{ACE} with respect to different ambient geometries, in particular to see how 
curvature properties affect the existence and nature of solutions. Some results in positive curvature were obtained 
in \cite{Sire1}, \cite{Sire2} while the first author and Birindelli \cite{BM} studied these problems in hyperbolic space, 
focusing on the analogue of the De Giorgi conjecture in that setting. Amongst the results obtained there is 
the existence of a unique `one-dimensional' solution $U_0$ of this equation on $\HH^n$, depending only on 
the signed distance from a totally geodesic hyperplane, as well as its uniqueness amongst all bounded solutions 
with the same asymptotic boundary values. The function $U_0$ takes the values $+1$ and $-1$ on two (open) 
hemispheres of $S^{n-1}$, the sphere at infinity; indeed (since the problem is conformally invariant), this also
characterizes solutions which have asymptotic boundary values on any two disjoint open spherical caps,
the union of the closures of which is all of $S^{n-1}$. Once the background metric is no longer Euclidean, 
the study of the problems \eqref{ACE} and \eqref{ACES} are quite different. 

A fairly recent paper by Pisante and Ponsiglione \cite{PP} proves existence of a broad class of solutions to 
\eqref{ACES} in $\HH^n$. (That paper contains a discussion and references concerning motivation 
for studying this problem on hyperbolic space coming from Yang-Mills theory.)  Just as there are 
many more complete properly embedded minimal hypersurfaces in $\HH^n$ than in $\RR^n$, so too 
can one find many solutions of the Allen-Cahn equation in $\HH^n$ with various specified asymptotic boundary
behaviours. One of the results they prove is that if $S \subset S^{n-1}$ is any smooth hypersurface with 
$S^{n-1} \setminus S = \Omega^+ \cup \Omega^-$ the union of two open sets, then there exists a solution
$u_\e$ to \eqref{ACES} with $u_\e \to \pm 1$ on $\Omega^\pm$. They use a barrier method, which is quite
effective but does not control the nodal set of $u_\e$ away from $S^{n-1}$ when $\e > 0$. Their main
interest is in establishing that as $\e \searrow 0$, this nodal set approaches the complete minimal surface with 
asymptotic boundary on $S$. 

Our goal in this paper is to give a quite different construction of  solutions of \eqref{ACE} which allows one
to estimate the nodal set of $u$ rather precisely without taking a limit in $\e$. The solutions we construct
this way are, admittedly, quite limited, but the simplicity of the proof and the possible further uses of the linear
estimates we derive here hopefully make the case that this argument is worth recording.  Let 
$H_1, \ldots, H_k$ denote a disjoint collection of totally geodesic hyperplanes in $\HH^n$, and let
$\del H_j = S_j$; this is a copy of a subsphere $S^{n-2} \subset S^{n-1}$ (this is `round' but not necessarily
totally geodesic so long as $n > 2$, but when $n=2$ it just consists of a pair of points). Write $S^{n-1} \setminus (\bigcup_j S_j) = 
\Omega^\pm$, so each of these open sets is a disjoint collection of open conformal `annuli'. 
Assuming that the mutual distances of the $H_j$ are all sufficiently large, we construct solutions 
of \eqref{ACE} which assume the asymptotic boundary values $\pm 1$ on $\Omega^\pm$; these
solutions have nodal sets which are quantifiably close to the union of hyperplanes $\cup H_j$. 
The method of proof is a rather simple gluing argument where the constituent pieces are the 
one-dimensional solutions of \cite{BM}.  The key step of the proof involves some new linear estimates
for Schr\"odinger-type operators on $\HH^n$ with a `stratified medium' type structure; these estimates
are of independent interest since there are no previous results, to our knowledge, concerning mapping
properties for Schr\"odinger operators on $\HH^n$ where the potential has this type of structure. 

Our result should be contrasted with a gluing result due to del Pino, Kowalczyk, Pacard and Wei \cite{PKPW}
for \eqref{ACE} in $\RR^2$. A key discovery in that paper is that the nodal sets `feel' one another in the
sense that in order to carry out the gluing, the ratios of the distances between these different layers must 
satisfy a nontrivial nonlinear equation which balances the configuration. No such interlayer effects appear in this 
hyperbolic setting. The explanation is simply that the layers are much more clearly separated, particularly 
out near infinity.  This sort of effect is well-known in many geometric and analytic problems in hyperbolic space. 

We anticipate that it is possible to carry out a somewhat more involved gluing argument to establish the
existence of solutions of \eqref{ACE} which have asymptotic boundary values $\pm 1$ on the components 
of a general decomposition $S^{n-1} = \overline{\Omega^+} \cup \overline{\Omega^-}$, where 
$\overline{\Omega^+} \cap \overline{\Omega^-} = S$ is a smooth hypersurface. Although this
is the same result as in \cite{PP}, the gluing method would (as here) give good control of the location of the
nodal set.   A more interesting and new direction is to study the vector-valued analogue of \eqref{ACE} in
$\HH^n$.  A special case is the Ginzburg-Landau equation, where $u$ is valued in $\mathbb C$ and $W = (1-|u|^2)$
is nondegenerate on $\{|z| = 1\}$ in the Morse-Bott sense. The analogues of the one-dimensional solutions from 
\cite{BM} and the interesting classes of asymptotic boundary values on $S^{n-1}$ which these vortices might
attain is not yet known. We hope to return to this elsewhere. 

The authors are grateful to the referee, whose comments were very helpful. 

\section{The single layer problem}
Hyperbolic space $\HH^n$ is a warped product of the real line and a hyperbolic space of one lower dimension, 
$\RR \times \HH^{n-1}$, with metric 
\[
g = dt^2 + \cosh^2 t\, g_{\HH^{n-1}}.
\]
The submanifold $\{t=0\}$ is a totally geodesic hyperplane $H$, and the function $t$ is the signed distance from $H$.
In these coordinates,
\[
\Delta_{\HH^n} = \del_t^2 + (n-1)\tanh t\, \del_t + \sech^2 t\, \Delta_{\HH^{n-1}}.
\]
The paper \cite{BM} contains a construction of a special solution $U_0$ of \eqref{ACE} on $\HH^n$ depending only
on the variable $t$, which thus satisfies the ODE
\begin{equation}
U_0''(t) + (n-1)\tanh t\, U_0'(t) - f(U_0(t)) = 0.
\label{eq:ode}
\end{equation}
In this section we prove mapping properties on certain weighted H\"older spaces for the operator 
\begin{equation}
L_0 = \del_t^2 + (n-1)\tanh t\, \del_t + \sech^2 t\, \Delta_{\HH^{n-1}} - f'(U_0(t)),
\label{eq:aclin}
\end{equation}
which is the linearization of \eqref{ACE} at $U_0(t)$. 

As proved in \cite{BM}, the function $U_0$ is unique amongst solutions of \eqref{eq:ode} which tend to
$-1$ as $t \to -\infty$ and to $+1$ as $t \to +\infty$. We now establish a few more properties about this
solution. First, while it is proved in \cite{BM} that $U_0$ is monotone, i.e. $U_0'(t) \geq 0$ for all $t$, it is in 
fact the case that $U_0'(t) > 0$ for all $t$. Indeed, if this were not the case, i.e.\ if $U_0'(t_0) = 0$ for some $t_0$, 
then $U_0'$ would reach a strict minimum there, so that $U_0''(t_0) = 0$. However, inserting $U_0'(t_0) = U_0''(t_0) = 0$ 
in \eqref{eq:ode} gives $f(U_0(t_0)) = 0$, so that $u(t) \equiv U_0(t_0)$ would be another solution with 
the same Cauchy data at $t_0$, which is impossible. 

Next, let $f'(\pm 1) = \gamma_{\pm}$, and define 
\[
- \beta_\pm = -\frac{n-1}{2} - \sqrt{ \frac{(n-1)^2}{4} + \gamma_\pm}\, .
\]
This is the negative root of the equation $ \lambda^2 + (n-1) \lambda - \gamma_\pm = 0$. Notice
that $-\beta_\pm < -(n-1)$; this will be important later.  Differentiating \eqref{eq:ode} gives 
\[
L_0 U_0'(t) = - (n-1) \sech^2 t  \, U_0'(t) < 0.
\]
On the other hand, for $t \ge T \gg 0$, 
\begin{multline*}
L_0 e^{- \beta_+  t} = ( \beta_+^2 - (n-1) \beta_+ \tanh t - f'(U_0(t)) ) e^{-\beta_+  t}  \\
= (n-1)\beta_+ (1-\tanh t) + (\gamma_+ - f'(U_0(t))) e^{-\beta_+ t} > 0,
\end{multline*}
since $f'(U_0(t)) \nearrow f'(1) = \gamma_+$. This gives that $L_0 ( C e^{-\beta_+ t } - U_0'(t)) > 0$ for $t \geq T$, so if we choose 
$C > 0$ so that $C e^{-\beta_+  T} \geq U_0'(T)$, 
then $U_0'(t) \leq C e^{-\beta_+  t}$ for $t \ge T$.  A similar argument gives $U_0'(t) \leq C e^{-\beta_- |t|}$ for $t \leq -T$. 
Now integrate from $t$ to $\infty$ or $-\infty$ to get that
\[
|1 \mp U_0(t)| \leq C e^{-\beta_\pm |t|} 
\]
as $|t| \to \infty$. Working slightly more carefully, one can even show that 
\[
\begin{array}{rccl}
1 - U_0 & = &  c_+ e^{-\beta_+ t} + \calO(e^{-(\beta_+ + \e)t}), \qquad & t \to \infty, \\
1 + U_0 & = & c_- e^{-\beta_- |t|} + \calO(e^{-(\beta_- + \e)|t|}), & t \to -\infty, 
\end{array}
\]
for some $\e > 0$.  The fact that there are different exponential rates at $\pm \infty$ is often irrelevant below, so we set 
$\beta = \min \{\beta_+, \beta_-\}$ (so $\beta > n-1$), and have proved that $U_0(t) = \pm 1 + \calO(e^{-\beta |t|})$ 
as $t \to \pm \infty$. 

We show finally that $U_0$ is strictly stable on $L^2(\RR; (\cosh t)^{n-1} dt)$, or in other words, the $L^2$ spectrum of 
the operator $-L_0$ is contained in the open half-line $(0,\infty)$. In concrete terms, this means that there exists a constant 
$c > 0$ such that for all $\phi \in \calC^\infty_0(\RR)$, 
\[
\int_{\RR} (-L_0 \phi) \phi \, (\cosh t)^{n-1} dt\geq c \int_{\RR} |\phi|^2\, (\cosh t)^{n-1} dt.
\]
First using $\del_t^2 + (n-1)\tanh t \, \del_t = (\cosh t)^{1-n} \del_t ( (\cosh t)^{n-1} \del_t)$, we have
\[
\int_{\RR} (-L_0 \phi) \phi \, (\cosh t)^{n-1} dt = \int_{\RR} \left(|\del_t \phi|^2 + f'(U_0) |\phi|^2\right) \, (\cosh t)^{n-1} dt.
\]
Since $f'(U_0(t)) \geq c' > 0$ for $|t| \geq T \gg 0$, we see that if $\phi$ is supported in $\{|t| \geq T\}$, 
then the right hand side is bounded from below by $c' \int |\phi|^2 (\cosh t)^{n-1}\, dt$. 
In particular, $\mbox{spec}(-L_0) \cap (-\infty, c')$ is discrete and finite.

Suppose that $\lambda_0$ is the lowest eigenvalue of $-L_0$, and the corresponding eigenfunction is $\phi_0 > 0$.  
Write $\psi = U_0'(t)$ and set $w = \phi_0/\psi$. By a short calculation, 
\[
w'' + \left((n-1)\tanh t + 2\psi^{-1} \psi'\right) w' + (\lambda_0 - (n-1) \sech^2 t) w  = 0.
\]
The same asymptotic analysis from above shows that if $\lambda_0 < 0$, then $|\phi_0(t)| \leq Ce^{-(\beta_\pm + \e) |t|}$ 
for some $\e > 0$ as $t \to \pm \infty$ so that $|w| \to 0$, and this is ruled out by the maximum principle. 

We have now proved that $-L_0 \geq 0$ and there is at most an isolated (and necessarily simple) eigenvalue at $0$.
If this eigenvalue were to exist, then the range of $-L_0$ would be a closed subspace of codimension $1$. Thus
to rule out this zero mode, it suffices to prove that the range of $-L_0$ is dense, and to do this it suffices to show 
that $L_0 u = f$ has a solution $u \in L^2$ for every $f \in \calC^\infty_0$.  For this we use the strictly positive
supersolution $w = U_0'$.  By domain monotonicity, the lowest eigenvalue of $-L_0$ on $[-T,T]$ with Dirichlet
boundary conditions is strictly positive, so for each $T$ there exists a unique $u_T$ which vanishes at the
endpoints and which solves $L_0 u_T = f$ on this interval.  Now choose $C > 0$ so that 
$f \pm C(n-1)\sech^2 t \, \psi \geq 0$.  Then $L_0( u_T - C\psi) \geq 0$ and in addition $u_T-C\psi \leq 0$ at $t = \pm T$,
so $u_T \leq C \psi$ uniformly in $T$. Similarly, $L_0 ( u_T + C\psi) \leq 0$ and $u_T  + C \psi \geq 0$ at $t = \pm T$,
so $u_T \geq -C \psi$. These bounds are uniform in $T$, so we may pass to the limit and obtain a function $u$
on $\RR$ which satisfies $L_0 u = f$ and $|u| \leq C \psi$.  Finally, since $\psi \in L^2( \RR; (\cosh t)^{n-1} dt)$, so
does $u$.   We have now proved that $0$ is not an eigenvalue of $-L_0$, hence $-L_0 \geq c > 0$ as claimed. 

It follows directly that $U_0$ is a strictly stable solution of the PDE \eqref{eq:aclin} on $L^2(\HH^n)$. 
Indeed, by Fubini's theorem, integrating separately in $\RR$ and $\HH^{n-1}$, we obtain
\begin{equation}
\begin{aligned}
& \int_{\HH^n} (-L_0 \phi) \phi\, dV_{\HH^n} =  \\ & \quad \int_{\HH^{n-1}} \int_\RR 
\left( |\del_t \phi|^2 + f'(U_0)|\phi|^2 + \sech^2 t\, |\nabla_{\hh^{n-1}} \phi|^2 \right)\, (\cosh t)^{n-1} \, dt dv_{\hh^{n-1}} \\
& \qquad \qquad \geq c \int_{\hh^{n-1}} \int_\rr |\phi|^2\, (\cosh t)^{n-1}\, dt dv_{\hh^{n-1}}
\end{aligned}
\label{stab}
\end{equation}
simply because $\sech^2 t |\nabla_{\hh^{n-1}} \phi|^2 \geq 0$. 

Altogether we have proved the
\begin{prop}
The operator $-L_0$ is self-adjoint, strictly positive, and invertible on $L^2(\HH^n, dV_{\HH^n})$. 
\label{pr:l2}
\end{prop}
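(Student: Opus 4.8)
The plan is to assemble the statement from three ingredients, two of which are already in place: the warped-product form of $L_0$, the strict stability inequality \eqref{stab}, and standard self-adjointness theory for Schr\"odinger operators on complete manifolds.

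First I would dispatch self-adjointness. Writing $L_0 = \Delta_{\HH^n} - V(t)$ with $V(t) = f'(U_0(t))$, the potential $V$ is smooth and bounded (since $U_0$ is bounded and $f \in \calC^\infty(\RR)$), in particular bounded below. As $\HH^n$ is complete, one may invoke the standard fact that $\Delta_g + V$ with $V$ bounded below on a complete Riemannian manifold is essentially self-adjoint on $\calC^\infty_0$; thus $-L_0$ has a unique self-adjoint extension, namely the closure of its restriction to $\calC^\infty_0(\HH^n)$. (This could alternatively be obtained from the separation of variables in $\RR \times \HH^{n-1}$ together with the one-dimensional analysis above, but the general statement is enough.)

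Next I would upgrade the stability estimate. Inequality \eqref{stab} already gives, for every $\phi \in \calC^\infty_0(\HH^n)$,
\[
\int_{\HH^n} (-L_0\phi)\,\phi\, dV_{\HH^n} \;\geq\; c \int_{\HH^n} |\phi|^2\, dV_{\HH^n},
\]
with $c>0$ the one-dimensional spectral gap established earlier (the term $\sech^2 t\,|\nabla_{\HH^{n-1}}\phi|^2$ only helps). Since $\calC^\infty_0(\HH^n)$ is a core for the quadratic form of the self-adjoint operator $-L_0$, this inequality passes to every $\phi$ in the form domain, so $\mathrm{spec}(-L_0) \subset [c,\infty)$. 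Invertibility is then immediate: a self-adjoint operator with spectrum contained in $[c,\infty)$, $c>0$, is boundedly invertible with $\|(-L_0)^{-1}\|_{L^2\to L^2} \le 1/c$.

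I do not expect a genuine obstacle at this stage — the substantive work, namely producing the gap constant $c$, was exactly the ODE and one-dimensional spectral analysis carried out on the preceding pages. The only step deserving a word of care is the passage from the form bound on $\calC^\infty_0(\HH^n)$ to one valid on the whole form domain, which rests on the core property (equivalently, on essential self-adjointness); everything else is a direct appeal to \eqref{stab} and elementary spectral theory.
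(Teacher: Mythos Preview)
Your proposal is correct and follows essentially the same route as the paper: the proposition there is stated as a summary (``Altogether we have proved the\ldots'') of the preceding discussion, whose endpoint is exactly the inequality \eqref{stab}. Your contribution is to make explicit the self-adjointness step (via the standard essential self-adjointness of $-\Delta_g + V$ with $V$ bounded below on a complete manifold) and the passage from the form bound on $\calC^\infty_0$ to the spectral inclusion $\mathrm{spec}(-L_0)\subset[c,\infty)$, both of which the paper leaves implicit.
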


We now study the solvability of $L_0$ acting between the weighted H\"older spaces
\[
\calC^{k,\alpha}_{\mu, \delta}(\HH^n, H) := \sech(\mu t)  \rho^\delta \calC^{k,\alpha}(\HH^n) = \big \{u = \sech(\mu t) \rho^\delta \tilde{u}, \ 
\tilde{u} \in \calC^{k,\alpha}(\HH^n)\big \}.
\]
Here $\calC^{k,\alpha}$ is the ordinary H\"older space on $\HH^n$ (to be concrete, the norm is the supremum
over all balls of radius $1$ of the H\"older norm on those balls defined with respect to the hyperbolic metric), 
the weight function $\sech \mu t$ is self-explanatory, and $\rho$ is a defining function for the boundary
of $\HH^{n-1}$ in the ball model (equivalently, we can use $\sech r$ where $r$ is the distance function in $\HH^{n-1}$
from a fixed point $o$) which is independent of $t$.

We now state and prove the main result of this section.  The situation for $n \geq 3$ is slightly different than for $n=2$, so
we begin with the former. 
\begin{prop}
Suppose that $n \geq 3$; fix any $\delta \in (0, \frac{1}{2}(n-2))$ and $\mu \in (0, \beta)$. Then
\[
L_0 : \calC^{2,\alpha}_{\mu,\delta}(\HH^n, H) \longrightarrow \calC^{0,\alpha}_{\mu, \delta}(\HH^n,H)
\]
is an isomorphism.
\end{prop}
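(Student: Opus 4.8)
The plan is to show that $L_0$ is injective and surjective; since the two weighted H\"older spaces are Banach and $L_0$ is bounded between them (conjugating by the slowly varying weight $w := \sech(\mu t)\,\rho^\delta$ turns $L_0$ into $\Delta_{\HH^n}$ plus first‑ and zeroth‑order terms with bounded coefficients), the open mapping theorem then yields boundedness of the inverse. Everything hinges on one construction: a strictly positive supersolution $W>0$ on $\HH^n$ with $W\asymp w$ and $-L_0W\ge c\,W$ for some $c>0$.

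I would look for $W$ of product form $W=\rho^\delta V(t)$, with $\rho=\sech r$ (so $\rho^\delta=(\cosh r)^{-\delta}$) and $V>0$. Writing $\calL:=\del_t^2+(n-1)\tanh t\,\del_t-f'(U_0(t))$ for the restriction of $L_0$ to functions of $t$, a one‑line computation gives
\[
-L_0W \;=\; \rho^\delta\,(-\calL V)\;+\;\sech^2 t\;V\;\bigl(-\Delta_{\HH^{n-1}}\rho^\delta\bigr),
\]
and since $-\Delta_{\HH^{n-1}}(\cosh r)^{-\delta}=(\cosh r)^{-\delta}\bigl(\delta(n-2-\delta)+\delta(\delta+1)\sech^2 r\bigr)\ge 0$ — this is where $0<\delta<n-2$, hence $n\ge 3$, enters; the hypothesis $\delta<\tfrac12(n-2)$ is more than enough — it suffices to find $V>0$ with $V\asymp \sech(\mu t)$ and $-\calL V\ge c\,V$ on all of $\RR$. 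I would take $V=\sech(\mu t)+\lambda\,U_0'$ with $\lambda$ large. Two facts from Section~2 drive this: first, $-\calL U_0'=(n-1)\sech^2 t\,U_0'>0$ \emph{everywhere}, so the corrector $\lambda U_0'$ can only help; second, for $|t|$ large one has $-\calL\bigl(\sech(\mu t)\bigr)=\sech(\mu t)\,\bigl(\mu^2(1-2\tanh^2\mu t)+(n-1)\mu\tanh t\tanh\mu t+f'(U_0)\bigr)$, whose bracket tends to $-\mu^2+(n-1)\mu+\gamma_\pm$ as $t\to\pm\infty$, and this limit is strictly positive precisely because $\mu<\beta_\pm$. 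Thus $-\calL(\sech\mu t)\ge \eta\,\sech\mu t$ for $|t|\ge T$ with $\eta>0$, while on the compact interval $|t|\le T$ the (bounded) negative part of $-\calL(\sech\mu t)$ is dominated by $\lambda(n-1)\sech^2 t\,U_0'$ once $\lambda$ is large; the comparison $U_0'\le C\sech(\beta|t|)\le C\sech(\mu t)$ (valid since $\mu<\beta$) also gives $V\asymp\sech\mu t$. This step — absorbing the sign change of $f'(U_0)$ near $t=0$ while retaining the correct decay at infinity — is the one genuinely delicate point; the rest is soft.

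Given $W$, injectivity is immediate. The conjugated operator $\phi\mapsto Q\phi:=W^{-1}L_0(W\phi)=\Delta_{\HH^n}\phi+2W^{-1}\nabla W\cdot\nabla\phi+(W^{-1}L_0W)\phi$ has bounded drift (one checks $|W^{-1}\nabla W|_g$ is bounded on $\HH^n$) and strictly negative zeroth‑order coefficient $W^{-1}L_0W\le -c<0$. If $L_0u=0$ with $u\in\calC^{2,\alpha}_{\mu,\delta}(\HH^n,H)$, then $\phi:=u/W$ is bounded and solves $Q\phi=0$; applying the Omori--Yau maximum principle on $\HH^n$ (legitimate since $\mathrm{Ric}_{\HH^n}\equiv-(n-1)g$) to $\pm\phi$ and using that $-Q$ has strictly positive zeroth‑order term forces $\phi\equiv 0$. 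The same conjugation shows that $-L_0$ obeys the ordinary maximum principle on any bounded domain $\Omega\subset\HH^n$: if $-L_0v\ge 0$ in $\Omega$ and $v\ge 0$ on $\del\Omega$ then $v\ge 0$ in $\Omega$; this is used next.

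For surjectivity, fix $g\in\calC^{0,\alpha}_{\mu,\delta}(\HH^n,H)$, so $|g|\le\|g\|\,w\le c^{-1}\|g\|\,(-L_0W)$, and exhaust $\HH^n$ by geodesic balls $B_R$. By Proposition~\ref{pr:l2} the bottom of the $L^2$‑spectrum of $-L_0$ is strictly positive; a fortiori, by domain monotonicity of the Rayleigh quotient, the lowest Dirichlet eigenvalue of $-L_0$ on each $B_R$ is positive, so there is a unique $u_R$ with $L_0u_R=g$ in $B_R$ and $u_R=0$ on $\del B_R$. Comparing $u_R$ against $\pm c^{-1}\|g\|\,W$ with the maximum principle above gives $|u_R|\le c^{-1}\|g\|\,W\le C\|g\|\,w$, uniformly in $R$. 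Interior Schauder estimates on balls of radius one — uniform because the coefficients of $L_0$ are bounded in $\calC^\infty$ and $w$ is slowly varying with respect to the hyperbolic metric — then upgrade this to a bound on $w^{-1}u_R$ in $\calC^{2,\alpha}$ over every fixed compact set, uniform in $R$. Passing to a subsequential limit $u_R\to u$ in $\calC^2_{\mathrm{loc}}$ produces $u\in\calC^{2,\alpha}_{\mu,\delta}(\HH^n,H)$ with $L_0u=g$ and $\|u\|_{\calC^{2,\alpha}_{\mu,\delta}}\le C\|g\|$. Hence $L_0$ is a bounded bijection, and the proof is complete.
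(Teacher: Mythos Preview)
Your argument is correct and follows the same overall architecture as the paper's proof: build a positive supersolution $W\asymp \sech(\mu t)\rho^\delta$ with $L_0W\le -cW$, solve Dirichlet problems on an exhaustion by balls (well-posed by Proposition~\ref{pr:l2} and domain monotonicity), use $W$ as a barrier to get uniform bounds, and pass to the limit. The genuinely different ingredient is the supersolution itself. The paper takes
\[
v=\min\bigl\{(U_0'(t)+\epsilon)\,\phi_\delta(z),\ e^{-\mu|t|}\bigr\},
\]
a non-smooth minimum of two pieces, where $\phi_\delta$ is the spherical function on $\HH^{n-1}$ satisfying $\Delta_{\HH^{n-1}}\phi_\delta=-\delta(n-2-\delta)\phi_\delta$; this is where the restriction $\delta<\tfrac12(n-2)$ is actually used, since that is the range in which $\phi_\delta>0$ with the correct asymptotic $\phi_\delta\sim\rho^\delta$. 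Your choice $W=(\cosh r)^{-\delta}\bigl(\sech(\mu t)+\lambda U_0'\bigr)$ is smooth, avoids invoking spherical function theory, and---as you observe---the inequality $-\Delta_{\HH^{n-1}}(\cosh r)^{-\delta}\ge \delta(n-2-\delta)(\cosh r)^{-\delta}$ only needs $0<\delta<n-2$, so your barrier in fact works on a wider range than the statement requires. You also make injectivity explicit via conjugation by $W$ and Omori--Yau, whereas the paper leaves this step implicit in the barrier method; your treatment is cleaner on that point.
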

\begin{proof}
It suffices to construct a supersolution $v$ which satisfies $A_1 \sech(\mu t) \rho^\delta \leq 
v  \leq A_1' \sech (\mu t) \rho^\delta$ and $L_0 v \leq -C \sech (\mu t) \rho^\delta$ for some $A_1, A_1', C > 0$. Indeed, 
if we have done this, then we can argue as follows.  Choose an increasing sequence $R_1 < R_2 < \ldots \to \infty$ and let 
$B_j$ denote the ball of radius $R_j$ around some fixed point $o \in \HH^n$.  If $h \in \calC^{0, \alpha}_{\mu, \delta}$, then 
consider the sequence of Dirichlet problems 
\[
L_0 u_j = h\ \mbox{on}\ B_j, \qquad \left. u_j \right|_{\del B_j} = 0.
\]
By Proposition~\ref{pr:l2} and domain monotonicity, there is a unique solution to this equation.  Given the supersolution 
above, choose $A_2 > 0$ sufficiently large so that $|h| \leq A_2 C \sech(\mu t) \rho^\delta$. Then 
\[
L_0 ( u_j + A_2 v) \leq h - A_2 C \sech(\mu  t) \rho^\delta \leq 0,
\]
and $u_j + A_2 v \geq 0$ on $\del B_j$.  Thus $u_j + A_2 v$ is a supersolution for $L_0$ which is
nonnegative on $\del B_j$ so $u_j + A_2 v \geq 0$ on all of $B_j$, or equivalently, 
$u_j \geq -A_2 A_1' \sech(\mu t) \rho^\delta$ independently of $j$. A similar argument gives
an upper bound which is also independent of $j$.  We conclude that $|u_j| \leq C \|h\|_0 \leq C \|h\|_{0,\alpha, \mu, \delta}$ 
with $C$ independent of $j$.  Now use local elliptic regularity, the Arzela-Ascoli theorem and 
a standard diagonalization argument to obtain a subsequence which converges in $\calC^{2,\alpha}$ 
on every compact set and which still satisfies this uniform $\calC^0$ bound. The limit function $u$ 
clearly satisfies the same bound too. 

We turn to the construction of the supersolution.  We first recall that since $\delta < (n-2)/2$, there exists
a function $\phi_\delta > 0$ on $\HH^{n-1}$ which satisfies $\Delta_{\HH^{n-1}} \phi_\delta = - \delta(n-2-\delta) \phi_\delta$
and $\phi_\delta \sim \rho^\delta$ as $\rho \to 0$, see \cite{Ter}.  This generalized eigenfunction (where `generalized' means 
simply that it is not in $L^2$) can be taken to be spherically symmetric with respect to some basepoint $o \in \HH^{n-1}$. 
Now consider the product $U_0'(t) \phi_\delta(z)$; this satisfies 
\[
L_0 (U_0'\phi_\delta)= -((n-1) +\delta (n-2-\delta)) \sech^2 t U_0'\phi_\delta<0.
\label{completess}
\]
However, as $|t|\to \infty$, $U_0'(t) \sim e^{-\beta |t|}$, which decays much faster than $e^{-\mu |t|}$. 
On the other hand, for the stated range of values of $\mu$, $e^{ -\mu |t|}$ is a supersolution of $L_0$
in the region $|t| \geq T$ for $T > 0$ sufficiently large.   

We combine these as follows. First calculate that 
\begin{multline*}  
L_0 ((U_0'+\epsilon)\phi_\delta)=  - \sech^2 t \left( (n-1) +\delta (n-2-\delta) \right) U_0'(t) \phi_\delta  \\ 
- \epsilon \left (f'(U_0)+ \delta (n-2-\delta) \sech^2 t \right) \phi_\delta.
\end{multline*}
Thus given any $T > 0$ we can choose $\e > 0$ sufficiently small so that the right hand side of this is bounded
by $- C \phi_\delta$ in the range $|t| \leq T$.  On the other hand,
\[
L_0 e^{-\mu |t|} = (\mu^2 - (n-1) \mu - \gamma_\pm) e^{-\mu |t|}  + \calO(e^{-\beta T}),\ \ \mbox{when} \  t \geq T,\ \mbox{resp.}\ t \leq -T,
\]
and since $\mu < \beta$, this is bounded by $-C e^{-\mu |t|}$ for $|t| \geq T$.  

Now set
\[
v(t,z)=\min\{ (U_0'(t)+\epsilon)\phi_\delta(z), e^{-\mu |t|} \}.
\]
This satisfies all the required properties.
\end{proof}

The difficulty when $n=2$ is that the functions $\phi_\delta$ are no longer available to us, and indeed, the operator
$\Delta_{\HH^{n-1}}$ is simply the Laplacian on $\RR$, and hence has no nonconstant positive supersolutions. 
Therefore, we establish the corresponding result without requiring any decay in the $\HH^{n-1} = \RR$ direction.
In the following, we let $\calC^{k,\alpha}_{\mu}(\HH^2, H) = \sech(\mu t) \calC^{k,\alpha}(\HH^2)$. 
\begin{prop}
Fix $\mu \in (0, \beta)$. Then 
\[
L_0 : \calC^{2,\alpha}_{\mu}(\HH^2, H) \longrightarrow \calC^{0,\alpha}_{\mu}(\HH^n,H)
\]
is an isomorphism.
\end{prop}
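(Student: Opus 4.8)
The plan is to run the argument for the case $n\ge 3$ essentially verbatim, with the generalized eigenfunction $\phi_\delta$ on $\HH^{n-1}$ replaced by the constant function $1$ (morally the unavailable endpoint $\delta=0$) and with every factor of $\rho^\delta$ deleted. As before, it suffices to produce a supersolution $v$ on $\HH^2$ obeying
\[
A_1\,\sech(\mu t)\ \le\ v\ \le\ A_1'\,\sech(\mu t),\qquad L_0 v\ \le\ -C\,\sech(\mu t)
\]
for constants $A_1,A_1',C>0$. Granting this, the exhaustion-and-barrier scheme from the preceding proof applies word for word: solve the Dirichlet problems $L_0 u_j=h$ on an increasing sequence of balls $B_j$ (unique solvability by Proposition~\ref{pr:l2} and domain monotonicity), use $A_2 v$ with $|h|\le A_2 C\sech(\mu t)$ as a barrier to get $|u_j|\le A_2 A_1'\sech(\mu t)$ uniformly in $j$, and pass to a limit via local elliptic estimates and diagonalization; interior Schauder estimates (legitimate because $\sech(\mu t)$ is uniformly comparable on balls of radius $1$) then upgrade this to the bound $\|u\|_{2,\alpha,\mu}\le C\|h\|_{0,\alpha,\mu}$, giving a bounded right inverse. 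Injectivity on $\calC^{2,\alpha}_\mu(\HH^2,H)$ will follow from the maximum principle: if $L_0 u=0$ with $|u|\le C_u\sech(\mu t)$, compare $u$ with $\eta v_{\mu'}$ for all $\eta>0$, where $v_{\mu'}$ is the supersolution constructed below with a strictly slower decay rate $\mu'\in(0,\mu)$, so that $u/v_{\mu'}\to 0$ wherever $|t|\to\infty$; a short local barrier at the two remaining asymptotic points $\del H=\{p_\pm\}\subset S^1$ (reached as $|z|\to\infty$ with $t$ bounded) controls $u/v_{\mu'}$ there as well, forcing $u\equiv 0$.

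For the construction of $v$, the computation from the preceding proof with $\phi_\delta$ replaced by $1$ gives
\[
L_0\bigl(U_0'(t)+\e\bigr)\ =\ -\sech^2 t\,U_0'(t)-\e\, f'(U_0(t)).
\]
The first term is $\le 0$ everywhere, and the second is $\le 0$ except possibly on the set $\{f'(U_0)<0\}$, which is compact since $f'(U_0(t))\to\gamma_\pm>0$ as $t\to\pm\infty$; on that set $\sech^2 t\,U_0'(t)\ge c>0$ because $U_0'>0$. Hence for all $\e>0$ small enough, $U_0'+\e$ is a positive, bounded global supersolution with $L_0(U_0'+\e)\le -c_T<0$ on every slab $\{|t|\le T\}$. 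Since $U_0'+\e$ tends to $\e$ and does not decay, we cap it with an exponential: for $|t|\ge T$, $T$ large,
\[
L_0\,e^{-\mu|t|}\ =\ \bigl(\mu^2-\mu\tanh t-f'(U_0(t))\bigr)\,e^{-\mu|t|},
\]
and since $\mu\in(0,\beta)\subset(0,\beta_\pm)$ and $\beta_\pm$ is the positive root of $\lambda^2-\lambda-\gamma_\pm$, the bracket converges to $\mu^2-\mu-\gamma_\pm<0$ as $t\to\pm\infty$, so $L_0 e^{-\mu|t|}\le -C e^{-\mu|t|}$ on $\{|t|\ge T\}$. Fix such a $T$, a correspondingly small $\e$, and then $A>0$ small enough that $A\,(U_0'+\e)\le e^{-\mu|t|}$ throughout $\{|t|\le T\}$ and $A\e\le 1$; set
\[
v(t,z)\ =\ \min\bigl\{\,A\,(U_0'(t)+\e),\ e^{-\mu|t|}\,\bigr\}.
\]
On $\{|t|\le T\}$ (indeed up to the crossover locus) $v$ equals $A(U_0'+\e)$, a supersolution there; on $\{|t|\ge T\}$ both entries are supersolutions, so their minimum is one in the barrier sense (all that the Perron scheme requires, crossover locus included); and $A\e\,e^{-\mu|t|}\le v\le e^{-\mu|t|}\asymp\sech(\mu t)$, with $L_0 v\le -C\sech(\mu t)$ on each branch. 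Repeating the construction with $\mu'\in(0,\mu)$ in place of $\mu$ produces the slower-decaying $v_{\mu'}$ needed above.

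The step I expect to be the main obstacle is precisely the injectivity/maximum-principle argument: because $\HH^{n-1}=\RR$ leaves no room for a weight $\rho^\delta$, the natural comparison functions do not decay in the $\HH^1$-direction, so one cannot simply invoke the $L^2$-stability of Proposition~\ref{pr:l2}. One must instead run the comparison directly on the noncompact manifold $\HH^2$, and in particular supply a local barrier at the two exceptional points $p_\pm$ adapted to the degenerate-at-infinity operator $\sech^2 t\,\del_z^2+\cdots$ (for instance a function of the form $e^{-a z}$ on $\{z\ge Z_0,\ |t|\le T_0\}$ with $a>0$ chosen using $\mu^2-\mu-\gamma_\pm<0$). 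Everything else is the verbatim $n\ge3$ argument with $\rho^\delta$ suppressed.
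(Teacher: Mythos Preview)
Your approach is essentially the same as the paper's: the paper's entire proof is the single sentence ``The proof is essentially the same as what was done above, but now using the supersolution $v=\min\{U_0'(t)+\e,\ e^{-\mu|t|}\}$,'' and you reproduce precisely this supersolution (your extra constant $A$ is harmless) together with the exhaustion-and-barrier scheme from the $n\ge 3$ case. Your additional discussion of injectivity at the endpoints $\del H$ goes beyond what the paper writes (the paper does not address this point separately), and while your barrier sketch there is not fully verified, it is not part of the paper's argument either.
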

The proof is essentially the same as what was done above, but now using the supersolution $v = \min \{ U_0'(t) + \e, e^{-\mu |t|} \}$.

\section{Multiple layer configurations}\label{2layersol}
We now turn to the construction and analysis of multiple layer approximate solutions for \eqref{ACE}, and the proof that these
may be perturbed to exact solutions. There are three steps to this. First we construct a family of approximate solutions to this
equation, $u_\calH$. Here $u_\calH$ vanishes along a collection $\calH = \{H_1, \ldots, H_N\}$ of disjoint totally geodesic hyperplanes. 
Denoting the minimum of the distances between any pair of elements of this collection by $D_{\calH}$, we then show that the 
linearization $L_\calH$ of \eqref{ACE} at $u_\calH$ is always Fredholm on certain weighted H\"older spaces. If $D_\calH$ is
sufficiently large, then $L_{\calH}$ is invertible on these spaces, and moreover, the norm of its inverse is uniformly bounded
as $D_\calH \to \infty$. From here it is a simple matter to perturb $u_\calH$ to an actual solution of the Allen-Cahn equation.

\subsubsection*{Approximate solutions}
To begin, suppose that $\calH = H_1 \cup \ldots \cup H_N$  be a collection of mutually disjoint totally geodesic hyperplanes 
in $\HH^n$.  Write $\HH^n \setminus \calH = \cup \Omega_j$, where each $\Omega_j$ is a connected open set in $\HH^n$ 
bounded by some number of the $H_j$ and some portion of the sphere at infinity.  We claim that it is possible to `label'
the components $\Omega_j$ with values $+1$ and $-1$ in such a way that if $\Omega_i$ and $\Omega_j$
share some common boundary $H_\ell$, then they have opposite sign.  To prove this, we use induction on $N$.
The claim is obvious when $N = 1$, and if we have established it for some value of $N$ and if $\calH$ 
is a collection of $N+1$ disjoint hyperplanes, then one of the $H_j \in \calH$ is `outermost' in the sense
that all of the remaining $H_i \in \calH$ lie in one component of $\HH^n \setminus H_j$, while the other
component contains no element of $\calH$. Choose a labelling for $\HH^n \setminus (\calH \setminus \{H_j\})$. 
Then exactly one of these components contains $H_j$, and whatever the sign attached to this component,
we assign the opposite sign to the other, `outer', component of $\HH^n \setminus H_j$. 

For each $H_j$, choose a hyperbolic isometry $\varphi_j$ which carries $H_j$ to a fixed totally geodesic hyperplane.  
If $U_0(t)$ is the single-layer solution of \eqref{ACE} associated to $H$, then we let $u_j = \varphi_j^* U_0$; thus
$u_j$ is simply the single layer solution associated to $H_j$. 
Note that there are actually two possibilities for $u_j$, depending on the orientation of  $\varphi_j$, or equivalently,
which side of $H_j$ is carried to which side of $H$. 

To define the approximate solution $u_{\calH}$, choose $u_j$ for each $H_j \in \calH_1$ so that $u_j$ tends to $1$
on the side of $H_j$ which is labelled $+1$ and $u_j$ tends to $-1$ on the side of $H_j$ which is labelled $-1$. 
We then glue together these various $u_j$ as follows. 
For each $j$, define the set $\calV_j$ which consists of the points which are closer to $H_j$ than to any other $H_i$.
The complement $\calE = \HH^n \setminus \cup_j \calV_j$ consists of all points which are equidistant to
at least two of the $H_j$. Let $\calU_j$ be a slight enlargement of $\calV_j$, say the union of all balls of radius
$1$ with centers in $\calV_j$. Thus $\{\calU_1, \ldots, \calU_N\}$ is an open cover of $\HH^n$, and assuming
that $D_\calH \geq 3$, each $\calU_j$ contains exactly one $H_j$. Let $\{\chi_j\}$ be a partition of unity subordinate 
to this open cover, and define $\calW$ to be the set of all points $p$ where $\chi_j(p) > 0$ for more than one $j$. 
Thus $\calW$ is an open neighbourhood around $\calE$.  We finally set
\begin{equation}
u_{\calH} = \sum_{j=1}^N \chi_j u_j. 
\end{equation}
Notice that this function is an exact solution of \eqref{ACE} on $\HH^n \setminus \overline{\calW}$.
As we describe below, the error term, which is its deviation from being an exact solution, is supported
in $\calW$ and is exponentially small as a function of $D_\calH$. 

\subsubsection*{Function spaces}
We next define the weighted H\"older spaces  
\[
\calC^{k,\alpha}_{\mu,\delta} (\HH^n,\calH) := \sech(\mu \tau) \rho^\delta \calC^{k,\alpha}(\HH^n) =
\{ u = \sech(\mu \tau) \rho^\delta \tilde{u}: \tilde{u} \in \calC^{k,\alpha}(\HH^n) \}.
\]
These are exact analogues of the spaces considered in \S 2, and we only need to define the appropriate weight functions 
$\tau$ and $\rho$. The inclusion of $\calH$ in the notation indicates that the weight functions (and hence the spaces themselves)
depend on the configuration. 

The function $\tau$ is defined to be a slight smoothing of the signed distance function from the union of the hyperplanes $H_j$.
Thus $\tau$ has the same sign as $u_\calH$ away from $\calH$, and vanishes on each $H_j$. This
signed distance function is smooth away from the equidistant set $\calE$ defined above. 
(Note that $\calE$ is a union of portions of totally geodesic hyperplanes
meeting at higher codimension totally geodesic subspaces.)  We can mollify this distance function in a small neighbourhood
of $\calE$ to make $\tau$ smooth and satisfy $|\nabla \tau| \leq 1+\e$ everywhere. 

As for the function $\rho$, consider the function $\rho_0$ in the one-layer case. This function is strictly positive on 
$\overline{\HH^n} \setminus (S^{n-1} \cap \overline{H})$, i.e.\ everywhere except where at infinity in $H$. Let $\hat{\chi}$ 
be a smooth nonnegative cutoff function which equals $1$ on a neighbourhood of $\overline{H} \subset 
\overline{\HH^n}$ and which vanishes outside a slightly larger neighbourhood.  Then for the configuration $\calH$ we take 
\[
\rho = \sum_{j=1}^N \varphi_j^*( \hat{\chi} \rho_0 ) + \sum_{j=1}^N \varphi_j^*( 1 - \hat{\chi}). 
\]
This agrees with the pullback $\varphi_j^* (\hat{\chi} \rho_0)$ near $\overline{H_j}$ and is strictly positive elsewhere on
the closure of $\HH^n$. 

\subsubsection*{Analysis of the linearized operator}
Rather than finding sub- and supersolutions again, we describe an alternate parametrix-based method for analyzing the 
linearization $L_\calH = \Delta_{\HH^n} - f'(u_\calH)$ of the nonlinear operator in \eqref{ACE} at $u = u_{\calH}$.  
\begin{prop}
Suppose that the minimal separation $D_\calH$ between elements of $\calH$ is sufficiently large.
Then as a mapping between weighted H\"older spaces, the linearized Allen-Cahn operator 
\[
L_\calH: \calC^{2,\alpha}_{\mu,\delta}(\HH^n, \calH) \longrightarrow \calC^{0,\alpha}_{\mu,\delta}(\HH^n, \calH)
\]
is invertible. Furthermore, the norm of its inverse is uniformly bounded as $D_{\calH} \to \infty$. 
\label{mainlin}
\end{prop}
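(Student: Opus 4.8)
The plan is to build an approximate inverse (parametrix) for $L_\calH$ by patching together the inverses of the single-layer operators $L_{0,j} := \varphi_j^* L_0 (\varphi_j^{-1})^*$ on the regions $\calU_j$, and then to correct the resulting error by a Neumann series argument that converges precisely because $D_\calH$ is large. Concretely, for a given $h \in \calC^{0,\alpha}_{\mu,\delta}(\HH^n,\calH)$, write $h = \sum_j \chi_j h$ and, on each piece, solve $L_{0,j} v_j = \chi_j h$ using the isomorphism from Proposition~2.2 (or Proposition~2.3 when $n=2$), transplanted to $H_j$ via $\varphi_j$. Note that the weight $\sech(\mu\tau)\rho^\delta$ agrees, on $\calU_j$, with $\varphi_j^*(\sech(\mu t)\rho_0^\delta)$ up to a uniformly bounded factor (since the other layers are far away and $\tau$ agrees with the signed distance to $H_j$ there), so these single-layer solves are bounded with constants independent of $D_\calH$. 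Then set $G h := \sum_j \tilde\chi_j v_j$, where $\tilde\chi_j$ is a cutoff equal to $1$ on the support of $\chi_j$ and supported in $\calU_j$; this $G$ is a bounded right parametrix, $G: \calC^{0,\alpha}_{\mu,\delta} \to \calC^{2,\alpha}_{\mu,\delta}$, uniformly in $D_\calH$.

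Next I would estimate the error $E := L_\calH G - \mathrm{Id}$. Writing $L_\calH G h = \sum_j L_\calH(\tilde\chi_j v_j)$ and using $L_\calH(\tilde\chi_j v_j) = \tilde\chi_j L_{0,j} v_j + [\text{commutator}] + (f'(u_{\calH}) - f'(u_j))\tilde\chi_j v_j$, the three contributions to $E$ are: (i) the commutator terms $[\Delta_{\HH^n},\tilde\chi_j] v_j$, which are supported where $\nabla\tilde\chi_j \ne 0$, i.e.\ in the overlap region $\calW$ near the equidistant set $\calE$; (ii) the potential-difference terms, also effectively supported near $\calW$ since $u_{\calH} = u_j$ away from $\calW$ inside $\calU_j$; and (iii) terms from $\sum_j\tilde\chi_j \chi_j h - h$, which vanish by the choice $\tilde\chi_j \equiv 1$ on $\mathrm{supp}\,\chi_j$ together with $\sum_j \chi_j = 1$. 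On $\calW$, the points are at distance roughly $D_\calH/2$ from every $H_j$, so $v_j$ and its derivatives are pointwise bounded by $C e^{-\beta D_\calH/2}$ times the weight, and likewise $|f'(u_{\calH}) - f'(u_j)| \le C e^{-\beta D_\calH/2}$ there. Hence $\|E\|_{\calC^{0,\alpha}_{\mu,\delta} \to \calC^{0,\alpha}_{\mu,\delta}} \le C e^{-\beta D_\calH/2} < 1$ once $D_\calH$ is large, so $\mathrm{Id} + E$ is invertible and $R := G(\mathrm{Id}+E)^{-1}$ is a genuine right inverse for $L_\calH$, with norm bounded uniformly as $D_\calH \to \infty$.

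To upgrade this to full invertibility I would observe that $L_\calH$ is self-adjoint on $L^2(\HH^n)$ (the potential $f'(u_{\calH})$ is bounded), so a bounded right inverse between these H\"older spaces plus an analogous argument on the $L^2$ side — or simply the symmetry of the construction, which equally produces a left parametrix — forces $L_\calH$ to be an isomorphism; the kernel on $\calC^{2,\alpha}_{\mu,\delta}$ is trivial because any such element would, via elliptic regularity and the weight $\mu < \beta$, $\delta > 0$, lie in $L^2(\HH^n)$ and then be killed by the uniform lower bound $\langle -L_\calH \phi,\phi\rangle \ge (c - Ce^{-\beta D_\calH/2})\|\phi\|^2 > 0$, which follows by localizing the quadratic form to the $\calU_j$ and applying the single-layer stability estimate \eqref{stab} on each piece.

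The main obstacle I anticipate is bookkeeping the weight functions through the patching: one must check that $\sech(\mu\tau)\rho^\delta$, $\varphi_j^*(\sech(\mu t))$, and $\varphi_j^*(\rho_0^\delta)$ are all uniformly comparable on $\calU_j$ (for $\rho$ this uses that $\hat\chi\rho_0$ pulled back by the various $\varphi_i$, $i \ne j$, does not interfere near $\overline{H_j}$, which is where disjointness of the $H_i$ at infinity enters), and that the commutator and potential-difference errors genuinely gain the factor $e^{-\beta D_\calH/2}$ \emph{relative to the weighted norm} — i.e.\ that the weight itself does not grow on $\calW$ faster than this decay. Since on $\calW$ we have $|\tau| \gtrsim D_\calH/2$, the weight $\sech(\mu\tau) \sim e^{-\mu|\tau|}$ is itself small there, which only helps; the one genuinely delicate point is the behavior of $\rho^\delta$ near the parts of $\calE$ that run out to $S^{n-1}$, and here one uses that $\calE$ stays uniformly away from each $\overline{H_j}$ at infinity when $D_\calH$ is large, so $\rho$ is bounded below on $\calW$ independently of $D_\calH$. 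Once these comparisons are in hand, the Neumann series closes and the uniform bound on $\|L_\calH^{-1}\|$ follows.
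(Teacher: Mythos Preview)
Your overall strategy --- patch together the single-layer inverses $G_j$ into a right parametrix $G = \sum_j \tilde\chi_j G_j \chi_j$ and correct by a Neumann series --- is exactly the paper's approach, and your treatment of the potential-difference contribution $(f'(u_\calH) - f'(u_j))\tilde\chi_j v_j$ is correct. However, your estimate of the commutator term (i) has a genuine gap. You assert that on $\calW$ the function $v_j = G_j(\chi_j h)$ and its derivatives are bounded by $C e^{-\beta D_\calH/2}$ times the weight; but Proposition~2.2 only places $v_j$ in $\calC^{2,\alpha}_{\mu,\delta}(\HH^n,H_j)$, so the decay rate of $v_j$ in $t_j$ is $\mu$, not $\beta$. (You may be conflating $v_j$ with $1\mp u_j$ or $U_0'$, which do decay at rate $\beta$.) With your choice of $\tilde\chi_j$ supported in $\calU_j$, the set $\{\nabla\tilde\chi_j \ne 0\}$ sits in $\calW$, where $|t_j|$ and $|\tau|$ are both $\approx D_\calH/2$; hence $\sech(\mu t_j)/\sech(\mu\tau)$ is bounded above and below there, and the commutator contribution to $E$ has norm of order~$1$, not $e^{-\beta D_\calH/2}$. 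The Neumann series does not close on this basis.

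The paper's remedy is to introduce a second length scale $R$: rather than confining $\tilde\chi_j$ to $\calU_j$, one takes $\tilde\chi_j \equiv 1$ on the distance-$R$ neighbourhood of $\mathrm{supp}\,\chi_j$ and supported in the distance-$(R{+}1)$ neighbourhood, with $R$ large but $R \ll \tfrac12 D_\calH$ (so $\mathrm{supp}\,\tilde\chi_j$ still avoids every $H_i$ with $i\ne j$). On $\mathrm{supp}\,\nabla\tilde\chi_j$ one then has $|t_j| - |\tau| \gtrsim R$, giving $\cosh(\mu\tau)\,\sech(\mu t_j) \le C e^{-\mu R}$; choosing $R$ large makes the commutator piece $K_\calH^{(2)}$ as small as desired. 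The potential-difference piece $K_\calH^{(1)}$ is controlled separately, by taking $D_\calH$ large relative to this fixed $R$. This decoupling into two parameters is exactly what is missing from your argument.
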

\begin{proof}
We have proved in \S 2 that if $H \subset \HH^n$ is a totally geodesic hyperplane, and $U_0$ is the one-dimensional solution
which vanishes on $H$, then $L_0 = \Delta_{\HH^n} - f'(U_0)$ is an invertible mapping on these weighted H\"older spaces.
This means that there is a bounded linear operator $G_0: \calC^{0,\alpha}_{\mu,\delta}(\HH^n, H) \rightarrow 
\calC^{2,\alpha}_{\mu,\delta}(\HH^n, H)$ such that $L_0 G_0 = \mbox{Id}$, $G_0 L_0 = \mbox{Id}$. 

As at the beginning of this section, for each $H_j \in \calH$, let $\varphi_j$ be a M\"obius transformation which carries $H_j$ to $H$.
We then pull back $U_0$ to a one-dimensional solution $u_j$ vanishing on $H_j$ and similarly transport the operators $L_0$ 
and $G_0$ to $L_j = \Delta_{\HH^n} - f'(u_j)$ and its inverse $G_j$. 

Recall the open cover $\{\calU_j\}$ of $\HH^n$ and the associated partition of unity $\{\chi_j\}$.  For each $j$, choose 
a smooth nonnegative function $\tilde{\chi}_j$ which takes values in $[0,1]$, such that $\tilde{\chi}_j = 1$ on the 
distance $R$ neighbourhood of the support of $\chi_j$, and such that the support of $\tilde{\chi}_j$ is contained 
in the distance $R+1$ neighbourhood of the support of $\chi_j$, where $R$ is a parameter to be chosen below. We 
also assume that $R  \ll \frac12 D_\calH$, hence the support of $\tilde{\chi}_j$ does not intersect $H_i$
for any $i \neq j$. 
Now define the operator
\[
\tilde{G}_\calH = \sum_{j=1}^N \tilde{\chi}_j G_j \chi_j.
\]
This is the standard way of pasting together local parametrices for an elliptic problem.  We compute:
\[
L_\calH \tilde{G}_\calH = \sum_{j=1}^N \bigg( \tilde{\chi}_j ( \mbox{Id} + (f'(u_j) - f'(u_\calH)) G_j) \chi_j + 
[L_\calH , \tilde{\chi}_j] G_j \chi_j \bigg). 
\]
Since $\tilde{\chi}_j \chi_j = \chi_j$ for each $j$, we can rewrite this as
\[
L_\calH \tilde{G}_\calH = \mbox{Id} + K_\calH,
\]
where the error term decomposes as
\[
K_\calH = K_{\calH}^{(1)} + K_{\calH}^{(2)} = \sum_{j=1}^N \left( \tilde{\chi}_j  (f'(u_j) - f'(u_\calH)) G_j \chi_j\right)
+ \sum_{j=1}^N \left( [L_\calH , \tilde{\chi}_j] G_j \chi_j \right).  
\]

To prove the first part of the theorem, we claim first  that $K_{\calH}$ has small norm when $D_\calH$ is sufficiently large, 
so that $L_\calH$ is in fact invertible. Granting this  claim, we can then invert $\mbox{Id} + K_\calH$ using a Neumann series. 
We remark that with a very slightly more involved parametrix construction, 
one can show that $L_\calH$ is Fredholm for every $\calH$, though the argument below relies on $D_\calH$ 
being large. It is possible that $L_\calH$ may have nullspace for certain special configurations when $D_\calH$ is small.

To prove the claim, we first note that the Schwartz kernel of each summand in $K_{\calH}^{(1)}$, namely 
$ \tilde{\chi}_j(z) (f'(u_j) - f'(u_\calH))(z) G_j(z, \tilde z) \chi_j (\tilde z)$, is supported in the region where $\tilde z \in 
\mbox{supp}\, \chi_j$, $z \in \mbox{supp}\, \tilde{\chi}_j$, and that in this region, $|f'(u_j(z)) - f'(u_\calH(z))| \leq C e^{- \beta 
D_\calH/2}$.  From this it follows that the norm of $K_\calH^{(1)}: 
\calC^{0,\alpha}_{\mu,\delta} \to \calC^{2,\alpha}_{\mu ,\delta}$ is no larger than by $C e^{ (\mu - \beta) D_\calH / 2}$. 
In particular, this is strictly less than $1$ when $D_\calH$ is large. 
Note, however, that even though $G_j$ is smoothing of order $2$, $K_\calH^{(1)}$ is not compact. This is because
the Schwartz kernel $G_j(z, z')$ does not decay as $z, z'$ tend to infinity in a fixed distance neighbourhood of 
$H$ while staying within a bounded distance of one another. This means that $K_\calH^{(1)}$ is not compact. 

Similar considerations apply to $K_\calH^{(2)}$. Indeed, by the choice of $\tilde{\chi}_j$, the commutator 
$[ L_\calH, \tilde{\chi}_j]$ is supported in the region where $\nabla \tilde{\chi}_j$ is nonvanishing, and this 
is disjoint from the support of $\chi_j$. Since the Schwartz kernel $G_j$ is $\calC^\infty$ away from the diagonal, 
each summand in $K_\calH^{(2)}$ has $\calC^\infty$ Schwartz kernel. We can make the norm of this part of the 
remainder term small by making the parameter $R$ sufficiently large. To see this, note that if $h \in \calC^{0,\alpha}_{\mu,\delta}$,
then $\chi_j h \in \calC^{0,\alpha}_{\mu,\delta}$ too, and is supported in the neighbourhood $\calU_j$ which contains
no other $H_i$. This means that $G_j(\chi_j h) \in \calC^{2,\alpha}_{\mu,\delta}(\HH^n, H_j)$.  Furthermore, the
support of $\nabla \tilde{\chi}_j$ has distance $R$ from the support of $\chi_j$, and the function $|\tau|$ is 
approximately $|t_j| - R$ there. Hence in this set, where $\nabla \tilde{\chi}_j \neq 0$, 
\[
\cosh (\mu \tau) \sech(\mu t_j) \leq C e^{-\mu R}.
\]
These observations prove that the norm of $K_\calH^{(2)}$ can be made as small as desired
if we take $R$ large. Since $D_\calH$ is assumed to be large, we have the freedom to make $R$ large, as needed.

We have now proved that the error term $K_\calH$ has norm strictly less than $1$ when $D_\calH \gg 0$. 
In fact, the norm of this error term decreases as $D_\calH \to \infty$.  This means that
\[
G_\calH = \tilde{G}_\calH \circ (\mbox{Id} + K_\calH)^{-1}
\]
is a right (and then necessarily a two-sided) inverse for $L_\calH$, and it is easy to see that its norm is uniformly 
bounded as $D_\calH \to \infty$. 
\end{proof}

\subsubsection*{The contraction mapping argument}
Equipped with the inverse $G_\calH$ constructed above, we can now conclude the argument to prove Theorem~\ref{mainlin}.

For each $\calH$, define $g_\calH = \Delta_{\HH^n} u_\calH - f(u_\calH)$. This function is nonvanishing only on the set 
$\calW$, and satisfies 
\[
|| g_\calH ||_{0,\alpha, \mu, \delta} \leq C e^{-\beta D_\calH/2}. 
\]

To find a function $v \in \calC^{2,\alpha}_{\mu,\delta}$ such that $\Delta_{\HH^n} (u_\calH + v) - f(u_\calH + v) \equiv 0$,
expand the nonlinear term in a Taylor series $f(u_\calH + v) = f(u_\calH) + f'(u_\calH)v + Q(u_\calH, v)$ where the last
term is the quadratic remainder,  and then write 
\[
L_\calH v = -g_\calH - Q(u_\calH, v) \Leftrightarrow  v = - G_\calH ( g_\calH + Q(u_\calH, v) )
\]
It is straightforward to verify that if $||v||_{2,\alpha,\mu,\delta} < \gamma \ll 1$, then
\[
||Q(u_\calH, v)||_{0,\alpha,\mu,\delta} \leq C \gamma^2\quad \mbox{and}\quad ||Q(u_\calH, v_1) - Q(u_\calH, v_2)||_{0,\alpha,\mu,\delta} \leq 
\frac12 ||v_1 - v_2||_{2,\alpha,\mu,\delta}.
\]
Now choose $D_\calH$ so large that $C e^{- \beta >D_\calH} < \gamma$. The standard contraction mapping argument produces 
a unique solution $v$ lying in the ball of radius $\gamma$ in $\calC^{2,\alpha}_{\mu, \nu}$.

In the two-dimensional case, we employ exactly the same argument, noting only that the correction term $v$ need not 
decay at the boundary of each $H_j$.  However, the solution $v$ is very small, and also vanishes on $\del \HH^2$ 
away from the boundaries of all the $H_j$. So in fact, the solution $u$ still has $u^{-1}(0)$ equal to a union of 
curves which are very small perturbations of the various $H_j$, and the boundary of this zero set is the same as 
the boundary of the union of the $H_j$. 

\subsubsection*{Stability of solutions}
\begin{prop}
The solutions $u$ constructed above are strictly stable. In other words, if $L = \Delta_{\HH^n} - f'(u)$, then
there exists a $c > 0$ so that 
\[
\int_{\HH^n} (-L \phi) \phi \, dV_{\HH^n} \geq c \int_{\HH^n} |\phi|^2\, dV_{\HH^n}
\]
for all $\phi \in \calC^\infty_0(\HH^n)$. 
\end{prop}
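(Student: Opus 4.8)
The plan is to localise the quadratic form of $-L$ by an IMS--type partition of unity adapted to the configuration $\calH$, and thereby to reduce strict stability of $u$ to the strict stability of the single--layer solutions established in \S 2, together with the positivity of $f'(u)$ far from $\calH$. Throughout write $u = u_\calH + v$, where $\|v\|_{2,\alpha,\mu,\delta}$ is exponentially small in $D_\calH$, and recall from \S 2 that each operator $-L_j = \Delta_{\HH^n} - f'(u_j)$ associated to a single layer is strictly stable on $L^2(\HH^n)$ with one and the same gap $c_0 > 0$: the $u_j$ are isometric images of $U_0$, and the computation \eqref{stab} gives the bound for all of them uniformly.

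First I would fix a large constant $R$, independent of $D_\calH$, so that once $D_\calH$ is sufficiently large one has $f'(u) \geq c' > 0$ everywhere on $\Sigma_R := \{\,d(\,\cdot\,, \calH) \geq R\,\}$ for some fixed $c' > 0$. This uses the exponential decay $U_0 = \pm 1 + \calO(e^{-\beta |t|})$ pulled back to each layer: on $\Sigma_R$ every $H_i$ is at distance at least $R$, hence $u_i = \pm 1 + \calO(e^{-\beta R})$, and --- this is the point at which the sign--consistent labelling of the components of $\HH^n \setminus \calH$ enters --- on any overlap the signs of the various $u_i$ agree, so that $u_\calH = \pm 1 + \calO(e^{-\beta R})$ on all of $\Sigma_R$, with no cancellation even near the equidistant set $\calE$; adding $v$ perturbs this by an exponentially small amount. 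Choosing $R$ large enough that $f' > 0$ on a neighbourhood of $\{\pm 1\}$ containing the range of $u|_{\Sigma_R}$ gives the claim. I would also record that for $D_\calH$ large each tube $\calN_j := \{\,d(\,\cdot\,,H_j) < R + W\,\}$, where $W$ is a further fixed large constant to be chosen below, meets $\mbox{supp}\,\chi_i$ for no $i \neq j$ (the sets $\mbox{supp}\,\chi_i \subset \calU_i$ lie at distance of order $D_\calH$ from $H_j$), so that $u_\calH \equiv u_j$ on $\calN_j$ and hence $u = u_j + v$ there.

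Next I would choose a partition of unity $\{\psi_0, \psi_1, \ldots, \psi_N\}$ with $\sum_{j=0}^N \psi_j^2 \equiv 1$, where $\psi_j$ ($1 \leq j \leq N$) is supported in $\calN_j$ and $\psi_0$ is supported in $\Sigma_R$, and where the transition of each $\psi_j$ takes place over an annular region of width $W$ contained in $\{\,R \leq d(\,\cdot\,,\calH) \leq R + W\,\}$; this can be arranged once $D_\calH \gg R + W$. By construction $\sum_j |\nabla \psi_j|^2 \leq C W^{-2}$, supported in that annular region, which lies in $\Sigma_R$. The IMS localisation identity gives
\[
\int_{\HH^n} (-L\phi)\,\phi\,dV_{\HH^n} = \sum_{j=0}^N \int_{\HH^n} \big(-L(\psi_j \phi)\big)(\psi_j\phi)\,dV_{\HH^n} - \int_{\HH^n} \Big(\sum_{j=0}^N |\nabla \psi_j|^2\Big)|\phi|^2\,dV_{\HH^n}.
\]
For $j = 0$, the function $\psi_0 \phi$ is supported where $f'(u) \geq c'$, so that term is at least $c' \int \psi_0^2 |\phi|^2$. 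For $1 \leq j \leq N$, on $\mbox{supp}\,\psi_j$ we have $f'(u) = f'(u_j) + \calO(\|v\|_{C^0})$, so the $j$-th term equals $\int \big(-L_j(\psi_j\phi)\big)(\psi_j\phi) + \calO(\|v\|_{C^0})\int \psi_j^2 |\phi|^2$, which by the strict stability of $-L_j$ on $L^2(\HH^n)$ is at least $\big(c_0 - \calO(\|v\|_{C^0})\big)\int \psi_j^2 |\phi|^2$. The localisation error is at most $C W^{-2} \int |\phi|^2$. Summing, and using $\sum_j \psi_j^2 \equiv 1$,
\[
\int_{\HH^n} (-L\phi)\,\phi\,dV_{\HH^n} \geq \Big(\min\{c',\,c_0 - \calO(\|v\|_{C^0})\} - C W^{-2}\Big)\int_{\HH^n} |\phi|^2\,dV_{\HH^n}.
\]
Fixing $W$ large enough that $C W^{-2} < \tfrac12 \min\{c',c_0\}$, and then taking $D_\calH$ large so that $\|v\|_{C^0}$ is small, the coefficient on the right is bounded below by a fixed $c > 0$, which is the assertion.

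I expect the main obstacle to be the localisation (gradient) term $\int (\sum_j |\nabla \psi_j|^2)|\phi|^2$: in contrast with a bounded domain it is not controlled by compactness, and a partition of unity whose transition has width $O(1)$ produces an error of size $O(1)$, which need not be dominated by the available spectral gaps $c_0, c'$. The resolution is to spread the transition over a region of large but \emph{fixed} width $W$, so that the error becomes $O(W^{-2})$, while keeping $W$ independent of --- and much smaller than --- the separation $D_\calH$, so that the whole transition still fits inside $\Sigma_R$, where $f'(u) > 0$. A secondary point needing care is precisely the uniform lower bound $f'(u) \geq c' > 0$ on $\Sigma_R$, which relies on the sign--consistent labelling of the components of $\HH^n \setminus \calH$ to prevent $u$ from acquiring a spurious zero near the equidistant set $\calE$, where single layers are glued but --- by the labelling --- with matching asymptotic signs.
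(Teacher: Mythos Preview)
Your proof is correct and follows essentially the same strategy as the paper: an IMS--type localisation reducing stability of $u$ to the single--layer stability \eqref{stab}, with the gradient error made small by spreading the transition over a wide region. The only real difference is the choice of partition of unity. The paper uses $N$ pieces $\{\chi_j^2\}$, one per hyperplane, with transitions of width $R$ placed near the equidistant set $\calE$ (so $R$ must be taken large with $D_\calH$); on each $\mbox{supp}\,\chi_j$ one has $u_\calH = u_j$ and applies single--layer stability directly. You instead use $N+1$ pieces, isolating an explicit ``far'' region $\Sigma_R$ on which $f'(u) \geq c' > 0$ is invoked as a separate ingredient, and you keep the transition at a fixed distance $R$ to $R+W$ from $\calH$ with $W$ independent of $D_\calH$. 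Your version is slightly tidier --- the IMS identity is exact (so no spurious factor of $2$ appears), and the constants $R,W$ do not grow with the configuration --- but the two arguments are otherwise the same in substance.
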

\begin{proof}
We have already shown that the one-dimensional solution $U_0(t)$ is strictly stable, and that \eqref{stab} holds.
These stability inequalities can be pasted together as follows.  Fix a partition of unity $\{\chi_j^2\}$ similar 
to the one used in the construction of $u_\calH$,  but chosen slightly differently so that the transition region 
take place in the distance $R$ neighbourhood of $\calV_j$ rather than the distance $1$ neighbourhood,
and such that $|\nabla \chi_j^2| \leq C/R$. Here $R$ can be chosen arbitrarily large, so long as $D_\calH$ is 
large enough.  Then for any $\e \in (0,1)$, we can choose $R$ so large that 
\[
\begin{split}
c \int |\phi|^2 &= c \sum_j \int \chi_j^2 |\phi|^2  \leq \sum_j \int \left(|\nabla (\chi_j \phi)|^2  + f'(u_\calH) \chi_j^2 |\phi|^2\right) \\
& = \sum_j \int \left(\chi_j^2 |\nabla \phi|^2 + 2 \chi_j \phi \nabla \chi_j \cdot \nabla \phi + \phi^2 |\nabla \chi_j|^2 + f'(u_\calH)
\chi_j^2 |\phi|^2\right)  \\
& \leq 2 \int \left(|\nabla \phi|^2  + f'(u_\calH)|\phi|^2\right)  + C \e \int |\phi|^2,
\end{split}
\]
where $C$ is independent of $\e$ and $R$, $u_\calH$ is the approximate solution and $u_j$ is the single layer solution $U_0$ which
equals $u_\calH$ in the support of $\chi_j$. Finally, writing $u = u_\calH + v$ where $v$ is small, we have that 
$|f'(u_j) - f'(u)| \leq \e$ in the support of $\chi_j$ as well. Thus if $\e$ is sufficiently small, we can absorb these terms 
into the other side. 
\end{proof}

\end{document}